\documentclass[12pt, a4paper, figuresright]{article}
\usepackage{amssymb, amsthm, amsfonts, mathrsfs}
\usepackage[T2A]{fontenc}
\usepackage[english]{babel}

\usepackage{xcolor}
\usepackage{commath}
\usepackage{thmtools}
\usepackage{thm-restate}
\usepackage{hyperref}
\usepackage{varioref}
\usepackage{cleveref}
\usepackage[numbers,square,sort&compress]{natbib}
\usepackage{makecmds}

\theoremstyle{plain}
\newtheorem{theorem}{Theorem}
\newtheorem{lemma}{Lemma}%
\newtheorem{corollary}{Corollary}

\theoremstyle{definition}%
\newtheorem{definition}{Definition}%

\theoremstyle{remark}%
\DeclareMathOperator{\RR}{\mathbb{R}}

\DeclareMathOperator{\ZZ}{\mathbb{Z}}

\DeclareMathOperator{\BC}{\mathcal{B}}

\DeclareMathOperator{\VC}{\mathcal{V}}

\DeclareMathOperator{\TC}{\mathcal{T}}

\DeclareMathOperator{\PC}{\mathcal{P}}

\DeclareMathOperator{\SC}{\mathcal{S}}

\DeclareMathOperator{\AC}{\mathcal{A}}
\DeclareMathOperator{\FC}{\mathcal{F}}

\DeclareMathOperator{\UC}{\mathcal{U}}

\DeclareMathOperator{\JC}{\mathcal{J}}
\DeclareMathOperator{\IC}{\mathcal{I}}

\DeclareMathOperator{\QC}{\mathcal{Q}}

\DeclareMathOperator{\AS}{\mathscr{A}}
\DeclareMathOperator{\BS}{\mathscr{B}}

\DeclareMathOperator{\FS}{\mathscr{F}}

\DeclareMathOperator{\PS}{\mathscr{P}}

\DeclareMathOperator{\NotQC}{\overline{\QC}}

\DeclareMathOperator{\BZero}{\mathbf 0}

\DeclareMathOperator{\supp}{supp}

\newcommand*{\intint}[2][1]{\left\{#1,\, \dots,\, #2\right\}}

{\par\color{red}} 
{} 

{\begin{quote}\color{blue}} 
{\end{quote}} 

{\par\color{green}} 
{} 

{\par\color{blue}} 
{}

{\par\color{blue}} 
{}

\newcommand\restr[2]{{
  \left.\kern-\nulldelimiterspace 
  #1 
  \vphantom{\big|} 
  \right|_{#2} 
  }}

\DeclareMathOperator{\VCdim}{\dim_{\!V\!C}}
\DeclareMathOperator{\shatt}{sh}
\DeclareMathOperator{\sshatt}{ssh}

\DeclareMathOperator{\ShiftHeller}{\widehat{h}}

\provideenvironment{section}[1]{\section{#1}\ignorespaces}{}

\provideenvironment{subsection}[1]{\subsection{#1}\ignorespaces}{}

\title{Column Number of Delta-modular matrices: Refined Analysis via Sauer Matrices}

\author{Elizaveta Pribytkova \& Dmitry Gribanov \& Stanislav Moiseev}
\date{\today}

\begin{document}

\maketitle

\begin{abstract}
    In this paper, we build upon the analysis initiated in \cite{DiffColumnsOther} and establish that the number of distinct columns of a $\Delta$-modular matrix $A \in \mathbb{Z}^{m \times n}$ of rank $m$ is $O(m^3 \Delta)$. This upper bound was previously known only for odd values of $\Delta$.
Recall that a matrix is called $\Delta$-modular if the maximum of the absolute values of its $m \times m$ minors equals $\Delta$.
\end{abstract}

\section{Introduction}

Our paper concerns the properties of so called \emph{$\Delta$-modular} matrices. In particular, we are interested in the question:
\begin{quote}
	How many distinct columns an arbitrary integer $\Delta$-modular matrix of rank $m$ can have?
\end{quote}

We begin by introducing the necessary formal definitions. For a matrix $A \in \RR^{m \times n}$ of rank $m$ and $j \in \intint{m}$, let
\[
\Delta_k(A) = \max\left\{\abs{\det (A_{\IC \JC})} \colon \IC \in \binom{[m]}{k},\, \JC \in \binom{[n]}{k} \right\},
\]
denote the maximum absolute value of all $k \times k$ subdeterminants of $A$.\footnote{Here, $[n]$ denotes $\intint n$ and $\binom{[n]}{k}$ denotes the family of $k$-element subsets of $[n]$.}
We also write $\Delta(A) := \Delta_m(A)$ for the maximum absolute value of the full-rank ($m \times m$) subdeterminants of $A$.

We follows the terminology of \cite{DiffColumnsOther}: the matrix $A$ is called $\Delta$-modular and $\Delta$-submodular if $\Delta(A) = \Delta$ and $\Delta(A) \leq \Delta$, respectively.
Moreover, a matrix \( A \) is said to be totally \( \Delta \)-modular and totally \( \Delta \)-submodular if \( \max_{k} \{\Delta_k(A)\} = \Delta \) and \( \max_{k} \{\Delta_k(A)\} \leq \Delta \), respectively.

Recall the main question of interest. The corresponding extremal quantity, commonly referred to as the \emph{generalized Heller constant} $h(\Delta, m)$, is the maximum integer $n > 0$ such that
\begin{itemize}
    \item there exists a matrix $A \in \ZZ^{m \times n}$ with pairwise distinct columns,
    \item the matrix $A$ is $\Delta$-modular (in other words, $\Delta_m(A) = \Delta$).
\end{itemize}
Thus, the \emph{column number problem} consists in understanding the behavior of $h(\Delta,m)$ as a function of both the rank $m$ and the maximum subdeterminant bound $\Delta$.

The study of matrices with bounded determinants is largely motivated by integer programming. The value \( \Delta(A) \) indicates how far the matrix is from being unimodular. In the case \(\Delta(A)=1\), all vertices of the LP relaxation are integral, and thus the ILP can be solved by a strongly polynomial-time algorithm. For \(\Delta(A)=2\), strongly polynomial-time algorithms are also known (see \cite{BimodularStrong,BimodularVert}). Yet, for any fixed \(\Delta \geq 3\), the existence of polynomial-time algorithms for ILPs remains open. This motivates a thorough investigation of the structural characteristics of such matrices, especially the possible columns they may contain.

A concrete and significant application of $h(\Delta,m)$ appears in \cite{ModularDiffColumns}. Namely, it is shown there that the $\ell_1$-norm distance $\pi$ between optimal solutions of an ILP problem and its LP relaxation (commonly referred to as the \emph{$\ell_1$ ILP proximity}) is bounded by
\begin{equation*}
    \pi \leq (m+1) \cdot \Delta \cdot \bigl(2 h(\Delta,m) + 1\bigr).
\end{equation*}
This additionally highlights the crucial role of $h(\Delta,m)$ in the ILP theory.






The origin of the column-number problem lies in a classical result by \cite{Heller_original}, who established that
\begin{equation}\label{eq:Heller}
    h(1,m)=m^2+m+1.
\end{equation}
Hence, the exact maximum number of distinct columns in the unimodular case is completely determined.

For general \(\Delta\), \cite{ModularDiffColumns} established the first bound that is polynomial in both parameters, thereby improving upon the earlier result of \cite{ModularRowsNum_Glanzer}, which was quadratic in $m$:
\[
h(\Delta,m)\le (m^2+m)\Delta^2+1.
\]

\noindent They also proved the exact formula
\begin{equation}\label{eq:bimod_diff_colums}
    h(\Delta,m) = m^2 + (2 \Delta - 1)m + 1
\end{equation}
whenever \(\Delta\le2\) or \(m\le2\).

Via lattice theory, \cite{DiffColumnsOther} revealed a remarkable relation between $h(\Delta,m)$ and the \emph{shifted Heller constant} $\ShiftHeller(m)$, expressed explicitly as follows:
\begin{equation}\label{eq:Delta_shift}
    h(\Delta,m) \leq h(1,m) + (\Delta-1) \cdot \ShiftHeller(m).
\end{equation}
Here the \emph{shifted Heller constant} $\ShiftHeller(m)$ is defined as the maximal number \( n \) such that
\begin{itemize}
    \item there exists a nonzero translation vector \( t \in [0, 1)^m \),
    \item and a matrix \( A \in \{-1, 0, 1\}^{m \times n} \) with pairwise distinct columns,
    \item such that \( t + A \) is totally $1$-submodular.
\end{itemize}

Using an argument based on the Sauer-Shelah-Perles Lemma, together with subtle reasoning about the existence of matrices of a certain type, \cite{DiffColumnsOther} established that
\[
    \ShiftHeller(m) = O(m^4).
\]
Together with \eqref{eq:Delta_shift} and \eqref{eq:Heller}, this implies the main result of \cite{DiffColumnsOther}:
\begin{equation*}
    h(\Delta,m) = O(m^4 \Delta).
\end{equation*}
Moreover, using a sharper version of $\ShiftHeller(m)$ (called the \emph{refined shifted Heller constant}), it is shown in \cite{DiffColumnsOther} that $h(\Delta,m) = O(m^3 \Delta)$ for odd $\Delta$.

Our work relies heavily on the approach of \cite{DiffColumnsOther}, which is based on the properties of the shifted Heller constant $\ShiftHeller(m)$, and on the exact formula for the $2$-modular case (see \Cref{eq:bimod_diff_colums}) established by \cite{ModularDiffColumns}. For further details, we refer the reader to \nameref{sec:contribution}.

Let us now recall some other known facts regarding the behavior of $h(\Delta,m)$. The standard lower bound
\begin{equation*}
    h(\Delta,m) \geq m^2 + (2 \Delta - 1)m + 1
\end{equation*}
is obtained from the matrix whose columns are the differences of pairs of vectors from the set
\begin{equation*}
    \{\BZero,e_1, e_2, \dots, e_m\} \cup \{2 e_1, 3 e_1, \dots, \Delta e_1\}.
\end{equation*}

An important \emph{\bf conjecture}, attributed to \cite{ModularDiffColumns} and \cite{DiffColumnsOther}, asserts that
\begin{equation*}
    h(\Delta,m) = \Theta(m^2 + m \Delta)
\end{equation*}
for sufficiently large values of either $\Delta$ or $m$. Progress in this direction was first made in \cite{ExcludingLineMatroids} and was subsequently substantially refined by \cite{DifferingSeparated}, who provided the bound\footnote{In fact, they provide the bound $\binom{m+1}{2} + 80 \Delta^7 m$ for the number of pairwise nonparallel columns.}
\begin{equation*}
    h(\Delta,m) \leq O(m^2 + m \Delta^7), \quad \text{for sufficiently large $m$}.
\end{equation*}

\subsection{Our Contribution}\label{sec:contribution}
    
We improve the best known bound on the shifted Heller constant $\ShiftHeller(m)$ in the following theorem.
\begin{theorem}\label{th:main_shifted_Heller}
    \begin{equation*}
        \ShiftHeller(m) = O(m^3).
    \end{equation*}
\end{theorem}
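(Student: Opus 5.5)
We reduce the bound on $\ShiftHeller(m)$ to a statement about the columns of a $\{-1,0,1\}$-matrix whose shifted version $t+A$ is totally $1$-submodular. Fix such $t \in [0,1)^m$, $t \neq \BZero$, and $A$ with $n = \ShiftHeller(m)$ pairwise distinct columns. Let $\SC = \supp(t)$ and $s = \abs{\SC} \geq 1$. First we record the elementary consequences of total $1$-submodularity. Every entry of $t+A$ lies in $[-1,1]$. Hence on $\SC$ each coordinate $a_{ij}$ can take only two values: $0$ or $-1$ when $t_i>0$, since $t_i + 1 > 1$. On the complement of $\SC$ the rows of $A$ form a totally unimodular $\{-1,0,1\}$-matrix. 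It follows that the restriction of $A$ to the rows outside $\SC$ has at most $h(1,m-s) = O(m^2)$ distinct columns, by \eqref{eq:Heller}. The rows indexed by $\SC$ then give a $\{0,1\}$-pattern (after the sign flip $a \mapsto -a$).

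The core step is to bound, for each fixed column pattern $c$ on the rows outside $\SC$, the number $N_c$ of distinct patterns on $\SC$ appearing together with $c$. The aim is $N_c = O(m)$, rather than the $O(m^2)$ that a direct Sauer--Shelah--Perles argument on the VC-dimension gives. Multiplying by the $O(m^2)$ choices of $c$ would then give $O(m^3)$. A more efficient version instead bounds the total $\sum_c N_c$ by $O(m^3)$ without a uniform bound. To bound $N_c$, consider the family $\FC_c \subseteq 2^{\SC}$ of supports on $\SC$. We show that $\FC_c$ cannot shatter any $2$-element set $\{i,j\}$ in a strong sense. If all four patterns $00,01,10,11$ occur on $\{i,j\}$, then some $2\times 2$ minor of $t+A$ with rows $i,j$ is $\pm(1-t_i)(1-t_j)$-type combinations exceeding $1$ in absolute value, or else combined with a third row gives a $3\times 3$ minor larger than $1$. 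This is the step where the shift $t$ matters. Concretely, the minor of $\begin{pmatrix} t_i & t_i-1\\ t_j-1 & t_j\end{pmatrix}$ equals $t_i+t_j-1$, which is small. So shattering alone is not forbidden, and we must instead use strong shattering ($\sshatt$) together with the other columns.

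We then invoke the Sauer-matrix viewpoint suggested by the title. A family whose strongly shattered sets all have size at most $d$ has at most as many members as the number of strongly shattered sets (Pajor/Bollobás--Radcliffe). So it suffices to show that every strongly shattered set of $\FC_c$ has size at most $1$, or that the number of strongly shattered pairs is $O(m)$ overall. This gives $N_c \le 1 + s + O(m)$.

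The main obstacle is exactly this middle step: showing that a strongly shattered $\{i,j\}$, or more generally a Sauer-type submatrix, forces a minor of $t+A$ of absolute value greater than $1$, using rows from $\SC$ together with the unimodular part. First I would try to extract the $4$ witness columns and find a third row from the complement of $\SC$, or use the fixed pattern $c$ together with a unit vector, to build a $3\times3$ minor equal to $\pm(1+t_i)$ or similar. If that fails, I would fall back on the weaker bound of $O(m)$ strongly shattered pairs per pattern, using a graph-theoretic acyclicity argument on the pairs, which still suffices for $O(m^3)$ in total.
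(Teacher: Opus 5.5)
\textbf{There is a genuine gap: the per-class step fails, for two independent reasons.}

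First, Pajor's theorem is applied in the wrong direction. \Cref{lm:sandwich} says $\abs{\sshatt(\FS)} \leq \abs{\FS} \leq \abs{\shatt(\FS)}$, so strongly shattered sets only give a \emph{lower} bound on a family. For example, $\{\{1\},\dots,\{n\}\}$ strongly shatters only $\emptyset$. Even full control of the strongly shattered sets of $\FC_c$ says nothing about $N_c$; an upper bound has to come from counting ordinarily shattered sets.

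Second, the target $N_c=O(m)$ is itself false. The rows with $t_i=\frac12$ lie in $\supp(t)$, and a single class can be large there.
\begin{itemize}
\item Take $m=4$, $t=\frac12\mathbf{1}$ and $A$ the type-$(0,4)$ Sauer matrix. Then $t+A$ is a $\pm\frac12$ matrix with four rows, hence totally $1$-submodular, since a $\pm1$ matrix of order $k\le 4$ has determinant at most $2^k$. Here $\supp(t)=[4]$, so there is a single class, and it strongly shatters a $4$-set. This refutes ``strongly shattered sets have size at most $1$''.
\item More generally, split $\{2,\dots,m\}$ into chains $p_1,\dots,p_a$ and $q_1,\dots,q_b$, let $t=\frac12\mathbf{1}$, and take the columns $\frac12\mathbf{1}-\chi_F$ for $F=\{p_1,\dots,p_i\}\cup\{q_1,\dots,q_j\}$. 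Subtracting row $1$, or bordering with it, reduces every minor of $t+A$ to at most two minors of a totally unimodular matrix: the incidence matrix of this family with an all-ones row on top. So $t+A$ is totally $1$-submodular with $(a+1)(b+1)=\Theta(m^2)$ columns, all in one class. It also has $\Theta(m^2)$ strongly shattered pairs $\{p_i,q_j\}$, with anchor $\{p_1,\dots,p_{i-1}\}\cup\{q_1,\dots,q_{j-1}\}$.
\end{itemize}
So neither the uniform bound nor your fallback on strongly shattered pairs can hold. A uniform per-class bound is at best $O(m^2)$, and multiplying it by the $O(m^2)$ classes of your split ($\supp(t)$ versus its complement) only recovers the old $O(m^4)$.

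\textbf{What is missing is the right split and the two structural inputs behind it.} The paper separates $\QC=\{i: t_i\in\{0,\frac12\}\}$ from $\NotQC$, and then argues as follows.
\begin{itemize}
\item It bounds the projection $\PS=\FS\cap\QC$ by $q^2+2q+1$. It maps $(I_q\mid x(\PC))$ by $U$ to a $2$-modular integer matrix and uses $h(2,q)=q^2+3q+1$. The order $q^2$ is genuinely needed, as the example above shows.
\item It uses the size-$4$ Sauer classification (\Cref{cor:4Sauer_classification}): every $4$-set shattered by $\FS$ lies in $\QC$. This gives $\VCdim(\PS_{\TC})\le 3-\abs{\TC}$ for $\TC\subseteq\NotQC$.
\item It applies Pajor in the correct direction, $\abs{\FS_{\PC}}\le\abs{\shatt(\FS_{\PC})}$, layer by layer, and sums over $\PC\in\PS$. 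This couples the layers through $\sum_{\abs{\TC}\le 3}\abs{\PS_{\TC}}=O(m^3)$.
\end{itemize}
Your proposal uses neither the $2$-modular bound nor the Sauer classification. These are exactly where the control of the half-integral rows and the exponent $3$ come from.
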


Together with \eqref{eq:Delta_shift} and \eqref{eq:Heller}, this implies the bound on $h(\Delta,m)$, which was previously known only for odd values of $\Delta$.
\begin{corollary}
    \begin{equation*}
        h(\Delta,m) = O(m^3 \Delta).
    \end{equation*}
\end{corollary}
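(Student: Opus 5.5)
The corollary follows directly by substituting the two known bounds into the inequality \eqref{eq:Delta_shift}, which holds for every $\Delta \ge 1$ and $m \ge 1$:
\[
    h(\Delta,m) \leq h(1,m) + (\Delta-1)\cdot \ShiftHeller(m).
\]
Both ingredients are taken as given.

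First, by Heller's formula \eqref{eq:Heller} we have $h(1,m) = m^2+m+1$, which is $O(m^2)$.

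Second, by \Cref{th:main_shifted_Heller} there is an absolute constant $C>0$ with $\ShiftHeller(m) \le C m^3$ for all $m \ge 1$. If the theorem is read only as an asymptotic statement for large $m$, the finitely many small values of $m$ can be absorbed into $C$, since each $\ShiftHeller(m)$ is finite. The $m^3$ bound itself is finite, and it is also finite by the earlier $O(m^4)$ bound of \cite{DiffColumnsOther}.

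Substituting both bounds gives
\[
    h(\Delta,m) \le m^2+m+1 + (\Delta-1) C m^3 \le 3m^2 + C m^3 \Delta \le (3 + C)\, m^3 \Delta,
\]
using $m \ge 1$ and $\Delta \ge 1$. This is the claimed bound $h(\Delta,m) = O(m^3\Delta)$, uniformly in both parameters.

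There is no real obstacle here: the bound holds for all $\Delta$, with no parity restriction as in the earlier result. The only point to verify is that \eqref{eq:Delta_shift} is valid for every $\Delta$, including even values, since the parity restriction in \cite{DiffColumnsOther} came only from their refined shifted constant. All of the substantive work lies in \Cref{th:main_shifted_Heller}, which is invoked here rather than re-proved.
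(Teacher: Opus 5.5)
Your proposal is correct and is exactly the paper's argument: substitute $h(1,m)=m^2+m+1$ and $\ShiftHeller(m)=O(m^3)$ from \Cref{th:main_shifted_Heller} into $h(\Delta,m)\le h(1,m)+(\Delta-1)\,\ShiftHeller(m)$, which holds for all $\Delta$. Your explicit constant $(3+C)\,m^3\Delta$ only spells out the $O$-bookkeeping that the paper leaves implicit.
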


To establish our results, we need a careful argument that relies on several structural facts previously obtained in \cite{ModularDiffColumns} and \cite{DiffColumnsOther}. Namely, we use the exact determination of $h(\Delta,m)$ for $m \leq 2$ or $\Delta \leq 2$ by \cite{ModularDiffColumns} (see \Cref{eq:bimod_diff_colums}), and the structural insights on matrices in the shifted Heller setting by \cite{DiffColumnsOther} (see \Cref{lm:4Sauer_classification} below). Additionally, our approach requires a tool that is slightly more powerful than the classical Sauer-Shelah-Perles Lemma (see \Cref{lm:shatt} below), namely Pajor's Sandwich Theorem (also known as Pajor's version of the Sauer-Shelah-Perles Lemma, see \Cref{lm:sandwich} below).

\section{Preliminaries}

\subsection{Notation}

We denote vectors by lowercase letters $a,b,f$, matrices by uppercase letters $A,B,C$, sets by calligraphic letters $\AC,\BC,\FC$, and classes of sets by fancy letters $\AS,\BS,\FS$. We use $[n]$ to denote the set $\{1,2,\dots,n\}$ and $\binom{[n]}{k}$ to denote the class of $k$-element subsets of $[n]$.

For $\FS \subseteq 2^{[n]}$ and $\AC \subseteq [n]$, we use the common notation
\begin{gather*}
    \FS \cap \AC = \left\{ \FC \cap \AC \colon \FC \in \FS \right\},\\
    \FS \cup \AC = \left\{ \FC \cup \AC \colon \FC \in \FS \right\}.
\end{gather*}

The $i$-th row and $j$-th column of a matrix $A$ are denoted by $A_{i*}$ and $A_{*j}$, respectively. For a subset of indices $\IC$, the corresponding subvector of a vector $v$ is written as $v_{\IC}$. Similarly, for subsets of indices $\IC$ and $\JC$, we define the row submatrix $A_{\IC *}$, the column submatrix $A_{* \JC}$, and the general submatrix $A_{\IC \JC}$.

The support of a vector $v$, i.e., the set of its nonzero components, is denoted by $\supp(v)$. The identity matrix of order $k$ is denoted by $I_k$. The all-zero matrix of size $m \times n$ is denoted by $\BZero_{m \times n}$.

\subsection{Preliminaries from the VC-theory}

First, we recall some important notions from Vapnik-Chervonenkis theory (VC-theory for short). In particular, we use the concepts of \emph{shattering} and \emph{VC-dimension}.
\begin{definition}
    Suppose that $\SC \subseteq [n]$ and $\FS \subseteq 2^{[n]}$. The class $\FS$ \emph{shatters} the set $\SC$ if
    \begin{equation*}
        \abs{\FS \cap \SC} = 2^{\abs{\SC}}.
    \end{equation*}
\end{definition}

\begin{definition}
    Suppose that $\SC \subseteq [n]$ and $\FS \subseteq 2^{[n]}$. The class $\FS$ \emph{strongly shatters} the set $\SC$ if there exists an \emph{anchor} set $\AC \subseteq [n] \setminus \SC$ such that 
    \begin{equation*}
        \forall \BC \subseteq \SC \colon \AC \cup \BC \in \FS.
    \end{equation*}
\end{definition}

\begin{definition}
    Suppose that $\FS \subseteq 2^{[n]}$. The classes of all sets shattered and strongly shattered by $\FS$ are denoted by $\shatt(\FS)$ and $\sshatt(\FS)$, respectively. In other words,
    \begin{gather*}
        \shatt(\FS) = \left\{ \SC \subseteq [n] \colon \text{$\SC$ is shattered by $\FS$} \right\},\\
        \sshatt(\FS) = \left\{ \SC \subseteq [n] \colon \text{$\SC$ is strongly shattered by $\FS$} \right\}.
    \end{gather*}
\end{definition}

\begin{definition}
    Suppose that $\FS \subseteq 2^{[n]}$. The \emph{VC-dimension} of $\FS$ is defined as the cardinality of a largest set that is shattered by $\FS$. We denote it by \[\VCdim(\FS).\]
\end{definition}

We now recall a fundamental result that plays a key role in VC-theory -- Pajor's Sandwich Theorem \cite{Pajor_VC_lemma}, which is also known as Pajor's variant of Sauer-Shelah-Perles Lemma.
\begin{lemma}[Pajor's Sandwich Theorem]\label{lm:sandwich}
    Suppose that $\FS \subseteq 2^{[n]}$, then
    \begin{equation*}
        \abs{\sshatt(\FS)} \leq \abs{\FS} \leq \abs{\shatt(\FS)}.
    \end{equation*}
\end{lemma}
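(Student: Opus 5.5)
We prove both inequalities by induction on $n$, using the standard down-shift along the last coordinate. The case $n = 0$ is immediate. Then $2^{[0]} = \{\emptyset\}$, and for $\FS = \emptyset$ all three quantities vanish. For $\FS = \{\emptyset\}$ the empty set is both shattered and strongly shattered (with anchor $\emptyset$), so all three quantities equal $1$.

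For the inductive step, fix $n \ge 1$ and split $\FS$ according to the element $n$. Let
\begin{gather*}
\FS_0 = \{\FC \subseteq [n-1] \colon \FC \in \FS\},\\
\FS_1 = \{\FC \subseteq [n-1] \colon \FC \cup \{n\} \in \FS\}.
\end{gather*}
Then $\abs{\FS} = \abs{\FS_0} + \abs{\FS_1} = \abs{\FS_0 \cup \FS_1} + \abs{\FS_0 \cap \FS_1}$, where both families live in $2^{[n-1]}$. By the induction hypothesis it suffices to prove the injections described below.

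For the upper bound, we show that $\shatt(\FS_0 \cup \FS_1)$ and $\{\SC \cup \{n\} \colon \SC \in \shatt(\FS_0 \cap \FS_1)\}$ are disjoint subclasses of $\shatt(\FS)$. They are disjoint because sets in the first class avoid $n$ and sets in the second contain it.
\begin{itemize}
\item If $\SC \subseteq [n-1]$ is shattered by $\FS_0 \cup \FS_1$, then it is shattered by $\FS$. Indeed, every trace on $\SC$ is realized by some $\FC$ or $\FC \cup \{n\}$ in $\FS$, and these two sets have the same trace on $\SC$.
\item If $\SC$ is shattered by $\FS_0 \cap \FS_1$, then $\SC \cup \{n\}$ is shattered by $\FS$. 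Each trace $\BC \subseteq \SC$ comes from some $\FC$ that lies in both $\FS_0$ and $\FS_1$, so both $\FC$ and $\FC \cup \{n\}$ belong to $\FS$. These realize the traces $\BC$ and $\BC \cup \{n\}$.
\end{itemize}
Hence $\abs{\shatt(\FS)} \ge \abs{\shatt(\FS_0 \cup \FS_1)} + \abs{\shatt(\FS_0 \cap \FS_1)} \ge \abs{\FS}$.

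For the lower bound, we show that every strongly shattered set of $\FS$ arises in one of the two ways below. Then
\[
\abs{\sshatt(\FS)} \le \abs{\sshatt(\FS_0 \cup \FS_1)} + \abs{\sshatt(\FS_0 \cap \FS_1)} \le \abs{\FS}.
\]
\begin{itemize}
\item Suppose $n \in \SC$ with anchor $\AC \subseteq [n] \setminus \SC$, so $n \notin \AC$. For every $\BC \subseteq \SC \setminus \{n\}$, both $\AC \cup \BC$ and $\AC \cup \BC \cup \{n\}$ lie in $\FS$. Therefore $\SC \setminus \{n\}$ is strongly shattered by $\FS_0 \cap \FS_1$ with anchor $\AC$.
\item Suppose $n \notin \SC$, and let $\AC' = \AC \setminus \{n\}$. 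For every $\BC \subseteq \SC$, the set $\AC' \cup \BC$ lies in $\FS_0$ if $n \notin \AC$ and in $\FS_1$ if $n \in \AC$. Hence $\SC$ is strongly shattered by $\FS_0 \cup \FS_1$ with anchor $\AC'$.
\end{itemize}
The map sending $\SC$ to $\SC$ (in the second case) or to $\SC \setminus \{n\}$ (in the first case), tagged by which case occurred, is injective. This proves the claimed inequality.

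The step that needs the most care is this last case analysis. The point is that the anchor and the distinguished element interact correctly: because the anchor must avoid $\SC$, when $n \in \SC$ the anchor cannot contain $n$, which is exactly what forces both shifted families to contain the required sets. Everything else is routine bookkeeping.
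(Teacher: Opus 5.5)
Your proof is correct. The identity $\abs{\FS} = \abs{\FS_0 \cup \FS_1} + \abs{\FS_0 \cap \FS_1}$ holds, and the two disjoint injections into $\shatt(\FS)$ are valid. The case split on whether $n \in \SC$ for strongly shattered sets also works: when $n \in \SC$, the condition $n \notin \AC$ is forced, so both $\AC \cup \BC$ and $\AC \cup \BC \cup \{n\}$ lie in $\FS$. The base case $n = 0$ is handled properly as well.

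The paper gives no proof of this lemma; it simply quotes Pajor's result. So there is no argument of the paper's to compare yours with. Your union/intersection induction is a standard self-contained proof, and it would make the paper independent of that citation.

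Note that the paper only uses the right-hand inequality $\abs{\FS} \le \abs{\shatt(\FS)}$. It uses it to derive the Sauer--Shelah--Perles Lemma and, in the main proof, to bound the layers $\abs{\FS_{\PC}}$. So the first half of your argument covers everything needed later; the strong-shattering half is correct but never used.
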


As a straightforward corollary, one obtains the well-known Sauer-Shelah-Perles Lemma \cite{Sauer1972, Shelah1972}, which was also proved (in a slightly weaker form) by Vapnik and Chervonenkis a few years earlier \cite{VapnikChervonenkis1971}.
\begin{lemma}[Sauer-Shelah-Perles Lemma or Shattering Lemma]\label{lm:shatt}
    Suppose that $\FS \subseteq 2^{[n]}$ and $\VCdim(\FS) \leq k$, then
    \begin{equation*}
        \abs{\FS} \leq s_{(n,k)} := \binom{n}{0} + \binom{n}{1} + \ldots + \binom{n}{k}.
    \end{equation*}
\end{lemma}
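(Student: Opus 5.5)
The statement to prove is \Cref{lm:shatt}, derived from \Cref{lm:sandwich}. The plan is to bound $\abs{\FS}$ by $\abs{\shatt(\FS)}$ and then count the sets that can possibly be shattered.

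First we invoke the right-hand inequality of Pajor's Sandwich Theorem, which gives $\abs{\FS} \leq \abs{\shatt(\FS)}$. Next we note that every $\SC \in \shatt(\FS)$ satisfies $\abs{\SC} \leq \VCdim(\FS) \leq k$. This holds simply by the definition of VC-dimension as the largest cardinality of a shattered set. Consequently
\begin{equation*}
    \shatt(\FS) \subseteq \bigcup_{i=0}^{k} \binom{[n]}{i},
\end{equation*}
and the family on the right has exactly $\binom{n}{0} + \binom{n}{1} + \dots + \binom{n}{k} = s_{(n,k)}$ members. Chaining the inequalities yields $\abs{\FS} \leq \abs{\shatt(\FS)} \leq s_{(n,k)}$.

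One degenerate case deserves a remark. If $\FS = \emptyset$, the bound is trivial, and the convention that the VC-dimension of the empty class is at most $k$ causes no trouble. Otherwise the empty set is shattered, because $\abs{\FS \cap \emptyset} = 1 = 2^0$, so $\shatt(\FS)$ is nonempty. This is consistent with $s_{(n,k)} \geq 1$.

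There is no real obstacle once \Cref{lm:sandwich} is assumed. The only point requiring care is to use the upper (shattering) half of the sandwich rather than the strong-shattering half. If one also had to prove Pajor's inequality itself, the standard route would be induction on $n$: split $\FS$ by membership of the element $n$ into $\FS_0$ and $\FS_1$, apply induction to $\FS_0 \cup \FS_1$ and to $\FS_0 \cap \FS_1$ (both viewed inside $2^{[n-1]}$), and check that shattered sets of the second family extend by $n$ to sets shattered by $\FS$. That step would be the main work, but it is not needed here.
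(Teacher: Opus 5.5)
Your proposal is correct and follows the paper's route exactly: the paper also derives this lemma as a direct corollary of the upper half of \Cref{lm:sandwich}, $\abs{\FS} \leq \abs{\shatt(\FS)}$. It then notes, as you do, that every shattered set has at most $k$ elements, so $\abs{\shatt(\FS)} \leq s_{(n,k)}$.
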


\subsection{Preliminaries from the Sauer Matrix Theory}

We now recall some important facts from the theory initiated by \cite{Heller_original} and considerably refined by \cite{DiffColumnsOther}, which we shall refer to as \emph{Sauer matrix theory}.
\begin{definition}
    Let $A \in \{-1,0,1\}^{m \times n}$. We say that a vector $t \in [0,1)^m$ is \emph{feasible} for $A$ if $t + A$ is totally $1$-submodular. Furthermore, we say that $A$ is \emph{feasible under translations} if there exists a vector $t \in [0,1)^m$ that is feasible for $A$; otherwise, $A$ is called \emph{infeasible under translations}.
\end{definition}

\begin{definition}
    The matrix $A \in \{-1,0,1\}^{k \times 2^k}$ is called \emph{Sauer matrix} of size $k$, if
    \begin{itemize}
        \item it has exactly one column for each of $2^k$ possible supports,
        \item and in each column the non-zero entries are chosen arbitrarily from \(\{-1,1\}\).
    \end{itemize}
    For concreteness, a Sauer matrix of size $3$ is of the form
    \[\begin{pmatrix}
        0 & \pm1 & 0 & 0 & \pm1 & \pm1 & 0 & \pm1 \\
        0 & 0 & \pm1 & 0 & \pm1 & 0 & \pm1 & \pm1 \\
        0 & 0 & 0 & \pm1 & 0 & \pm1 & \pm1 & \pm1
    \end{pmatrix},\quad \text{for any choice of signs.}\]

    \noindent Moreover, the Sauer matrix $S$ is said to be of \emph{type} $(s, k-s)$, if there are exactly $s$ rows in $S$ that contain at least one entry equal to $1$.
\end{definition}








The work of \cite{DiffColumnsOther} provides a complete characterization of Sauer matrices of sizes up to $5$. We recall part of their results.
\begin{theorem}[\cite{DiffColumnsOther}]\label{lm:4Sauer_classification}
    Let \( S \) be a Sauer matrix of size $4$ and let \( t \in [0, 1)^4 \) be feasible for \( S \).
    \begin{itemize}
        \item If \( S \) is of type \( (0, 4) \), then \( t = \left( \frac{1}{2}, \frac{1}{2}, \frac{1}{2}, \frac{1}{2} \right)^\top \);

        \item  If \( S \) is of type \( (1, 3) \), then \( t = \left( 0, \frac{1}{2}, \frac{1}{2}, \frac{1}{2} \right)^\top \);

        \item If \( S \) is of type \( (2, 2) \), then \( t = \left( 0, 0, \frac{1}{2}, \frac{1}{2} \right)^\top \);

        \item If \( S \) is of type \( (3, 1) \) or \( (4, 0) \), then \( S \) is infeasible under translations.
    \end{itemize}
\end{theorem}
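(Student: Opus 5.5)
The plan is to turn total $1$-submodularity of $t+S$ into elementary inequalities on the coordinates of $t$, working in three stages. First we pin the coordinates of rows containing a $1$. Then we rule out types $(3,1)$ and $(4,0)$ with a parity argument. Finally we pin the remaining coordinates to $\frac12$.

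\emph{Step 1 (entries).} Every entry of $t+S$ lies in $[-1,1]$. If row $i$ contains an entry $1$, then $t_i+1\le 1$, and together with $t_i\ge 0$ this gives $t_i=0$. So in a matrix of type $(s,4-s)$ the $s$ rows containing a $1$ have zero translation. We order the rows so that these are rows $1,\dots,s$; this matches the shape of the vectors in the statement.

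\emph{Step 2 (types $(3,1)$, $(4,0)$).} Suppose $s\ge 3$, and take three rows with $t_i=0$. The restriction of $t+S$ to these rows is an integer $\{-1,0,1\}$ matrix. Since every support pattern on the four rows occurs, its columns realise every support on the three rows. It must be totally unimodular. We show no such matrix exists. Take the columns whose supports restricted to the three rows are $\{1,2\},\{1,3\},\{2,3\}$. Their $3\times 3$ minor has the form
\[
\det\begin{pmatrix} a & b & 0\\ c & 0 & d\\ 0 & e & f\end{pmatrix}=-(ade+bcf),
\]
with $a,\dots,f\in\{\pm1\}$, so it lies in $\{0,\pm2\}$. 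If it is $\pm2$ we are done. If it is $0$, the signs satisfy $ade=-bcf$. We then replace one of the three columns by the column with support $\{1,2,3\}$ (or use a singleton column in a $2\times2$ or $3\times3$ minor) and do a short case check on the signs. This produces a minor equal to $\pm2$. Hence $S$ is infeasible under translations, which is the last bullet.

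\emph{Step 3 (rows with only $-1$'s).} Let $i$ be a row whose nonzero entries are all $-1$. Its entries in $t+S$ are $t_i$ and $t_i-1$. We must show $t_i=\frac12$. We aim to find, for each such row, two minors that are affine in $t_i$ with slope $\pm2$ and integral constant term. Together with the constraint $[-1,1]$ they give $t_i\ge\frac12$ and $t_i\le\frac12$. To find them, we expand along row $i$ a $k\times k$ minor ($k=3$ or $4$) whose other rows are either integer rows (from Step 1) or other $(-1)$-rows. The columns are chosen in pairs differing only in coordinate $i$, so that the dependence on $t_i$ is linear. The case $t_i=0$ must be excluded along the way. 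For type $(2,2)$ this already follows from Step 2, since $t_i=0$ would give three integer rows. For types $(1,3)$ and $(0,4)$ it must come from these minors.

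I expect the main obstacle to be type $(0,4)$. There all four coordinates are unknown, and the $2\times 2$ minors give nothing: a direct computation gives $1-t_i-t_j$ and $t_j-t_i$, both automatically in $[-1,1]$. The constraint therefore has to come from $3\times3$ or $4\times4$ minors, which are multilinear polynomials in several $t_j$. My first attempt is to use the symmetry among the rows and the presence of all $16$ supports. I would choose columns indexed by supports $\{j\},\{k\},\{j,k\},\{i,j,k\},\dots$, so that in the determinant the variables $t_j,t_k$ cancel. This leaves expressions like $1-2t_i$ plus an integer multiple of $\pm1$, or $2t_i$ minus an integer, forcing $t_i=\frac12$. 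The cases of types $(1,3)$ and $(2,2)$ then follow by the same computation, with some of the variables set to $0$.
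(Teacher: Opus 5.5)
Your Steps 1 and 2 are sound. Writing $c_F$ for the column with support $F$, the sign check in Step 2 does close: after scaling rows and columns by $-1$ you may take $c_{12}=(1,-1,0)$ and $c_{13}=(1,0,-1)$. The $2\times2$ minors against the full-support column then force $c_{123}=(1,-1,-1)$ and $c_{23}=(0,1,1)$, and $\det(c_{12},c_{13},c_{23})=2$. The paper does not prove this theorem; it quotes it from \cite{DiffColumnsOther}. So the only question is whether your argument is complete, and it is not.

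Step 3 is the real content of the classification, and you never exhibit a minor that pins a row without $+1$'s to $\frac12$. The heuristics you give also point the wrong way.
\begin{itemize}
\item Every minor of $t+S$ is automatically affine in $t$: subtract one column from the others, and only that column still involves $t$. So nothing has to be arranged to ``make the dependence linear''.
\item If a minor contains two columns differing only in coordinate $i$, their difference is $\pm e_i$. Expanding along it deletes row $i$, so $t_i$ drops out entirely.
\item In type $(0,4)$ no $3\times3$ minor works. On three such rows every $3\times3$ minor equals $\langle w,t\rangle+c$ with $w\in\{-1,0,1\}^3$ and $c\in\ZZ$, so no coefficient of a single $t_i$ has absolute value $2$.
\item ``The same computation with some variables set to $0$'' does not transfer to types $(1,3)$ and $(2,2)$, because the integer rows carry arbitrary signs. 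For example, in type $(1,3)$ with $i=4$, the columns with supports $\{1,2,3\},\{1\},\{2\},\{3\}$ give the minor $\pm(s+s')t_4$, where $s,s'$ are the row-$1$ entries of the first two columns. This vanishes when $s'=-s$.
\end{itemize}

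The missing idea is to take four columns that \emph{agree} in row $i$. Subtracting the first from the others turns row $i$ into $(\tau,0,0,0)$ with $\tau\in\{t_i,\,t_i-1\}$, and turns the other rows into $t$-free integer differences. So the minor is $\pm\tau\det D$, where $D\in\ZZ^{3\times3}$ is built from differences of columns of $S$ on $R=[4]\setminus\{i\}$. The $8$ columns avoiding $i$ and the $8$ columns containing $i$ each restrict on $R$ to a Sauer matrix of size $3$. You need, in each family, four columns with $|\det D|\ge2$. These give $2t_i\le1$ and $2(1-t_i)\le1$, hence $t_i=\frac12$.

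In type $(0,4)$ this is explicit: supports $\{j,k,l\},\{j\},\{k\},\{l\}$ give $\pm2t_i$, and adding $i$ to each support gives $\pm2(t_i-1)$.

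In general it follows from your own Step 2 lemma. Flip row signs so that the singleton columns of the size-$3$ Sauer matrix $P$ are $e_1,e_2,e_3$, and let $B$ be $P$ with an all-ones row added on top. The $4\times4$ minors of $B$ are exactly the numbers $\det D$. Suppose they all lay in $\{0,\pm1\}$. Left-multiplying $B$ by the inverse of its unimodular submatrix on the columns $\emptyset,\{1\},\{2\},\{3\}$ gives a matrix with three properties: it contains $I_4$, all its $4\times4$ minors lie in $\{0,\pm1\}$, and its last three rows are $P$. Such a matrix is totally unimodular, so $P$ would be too, contradicting Step 2.
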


\begin{corollary}\label{cor:4Sauer_classification}
    Let \( t \in [0, 1)^4 \) be a vector that is feasible for some Sauer matrix of size $4$,
    \begin{equation*}
        \text{then}\quad t \in \left\{0,\frac{1}{2}\right\}^4.
    \end{equation*}
\end{corollary}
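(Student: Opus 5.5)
The corollary should follow directly from the classification in \Cref{lm:4Sauer_classification}, by splitting on the type of the Sauer matrix. Fix a Sauer matrix $S$ of size $4$ for which $t$ is feasible. Its type is $(s,4-s)$ for a unique $s \in \{0,1,2,3,4\}$. If $s \in \{3,4\}$, the theorem says $S$ is infeasible under translations, which contradicts the existence of the feasible vector $t$. So only types $(0,4)$, $(1,3)$ and $(2,2)$ can occur.

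In those three cases the theorem pins $t$ down to a specific vector: $(\tfrac12,\tfrac12,\tfrac12,\tfrac12)^\top$, $(0,\tfrac12,\tfrac12,\tfrac12)^\top$ or $(0,0,\tfrac12,\tfrac12)^\top$. Each of these has all coordinates in $\{0,\tfrac12\}$, so $t \in \{0,\tfrac12\}^4$.

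The one point that needs care is the ordering of the rows. The vectors in the theorem are written with the rows containing a $+1$ entry listed first, which should be read as holding up to a permutation of the rows. This is justified because permuting the rows of $S$ together with the coordinates of $t$ preserves both total $1$-submodularity of $t+S$ and the Sauer property. In general, then, the statement is that $t_i = 0$ for rows containing a $+1$ and $t_i = \tfrac12$ otherwise. The conclusion $t \in \{0,\tfrac12\}^4$ is invariant under coordinate permutations, so it is unaffected. Beyond this bookkeeping there is no real obstacle.
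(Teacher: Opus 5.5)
Your proposal is correct and matches the paper, which gives no separate proof and treats the corollary as immediate from the classification theorem: types $(3,1)$ and $(4,0)$ are excluded by infeasibility, and in types $(0,4)$, $(1,3)$, $(2,2)$ the forced $t$ has all coordinates in $\{0,\frac{1}{2}\}$. Your remark on row ordering is correct bookkeeping that the paper leaves implicit; it does not affect the conclusion, since the conclusion is invariant under coordinate permutations.
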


\begin{theorem}[\cite{DiffColumnsOther}]\label{lm:5Sauer_classification}
    There does not exist a Sauer matrix of size $5$ which is feasible under translations.
\end{theorem}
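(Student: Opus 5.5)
Suppose, for contradiction, that $S\in\{-1,0,1\}^{5\times 32}$ is a Sauer matrix and $t\in[0,1)^5$ is feasible for $S$. The plan is to use \Cref{lm:4Sauer_classification} to pin down $t$ and the sign pattern of $S$ almost completely, and then exhibit one explicit square submatrix of $t+S$ whose determinant has absolute value greater than $1$.

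\emph{Reduction to size $4$.} Fix any $4$-element set $\IC\subseteq[5]$ and let $\JC$ be the set of the $16$ columns of $S$ whose support lies in $\IC$. Then $S_{\IC\JC}$ is a Sauer matrix of size $4$. Moreover $t_{\IC}+S_{\IC\JC}$ is a submatrix of $t+S$, so it is again totally $1$-submodular, and $t_{\IC}$ is feasible for $S_{\IC\JC}$. By \Cref{cor:4Sauer_classification}, $t\in\{0,\frac12\}^5$. By \Cref{lm:4Sauer_classification}, the restricted matrix has type $(s,4-s)$ with $s\le 2$. Furthermore, $t_i=0$ holds exactly for the rows $i\in\IC$ that contain a $+1$ inside $S_{\IC\JC}$.

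\emph{Local structure.} The $1\times 1$ minors already say a lot. If $t_i=\frac12$, every nonzero entry of row $i$ must equal $-1$, since an entry $+1$ would give $\frac32$. If three rows had $t_i=0$, some $4$-subset would contain all three, forcing type $(3,1)$ or $(4,0)$, which is excluded. Hence at most two coordinates of $t$ vanish, and $t$ has at least three entries equal to $\frac12$, each in a row carrying only entries $0,-1$. This leaves three cases, according to whether $0$, $1$ or $2$ coordinates of $t$ are zero.

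\emph{Finding a violating minor.} In the case $t=\frac12\mathbf 1$, I would take the five columns with singleton supports $\{1\},\dots,\{5\}$. The corresponding submatrix of $t+S$ is $\frac12 J_5-I_5$, with eigenvalues $-1$ (multiplicity $4$) and $\frac52-1=\frac32$. Its determinant is $\frac32>1$, a contradiction. This already shows that the obstruction is genuinely $5$-dimensional: the analogous $4\times4$ matrix has determinant $-1$, which is exactly why type $(0,4)$ survives in size $4$.

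For the remaining cases, I would first normalise the zero rows. Consistency of the classification over all $4$-subsets containing a zero row $i$ forces the entries of row $i$ to behave like those in the $(1,3)$ and $(2,2)$ patterns. I would then try analogous choices of columns, namely singletons together with a few two-element supports that touch the zero rows, and adjust the signs so that the resulting $4\times4$ or $5\times5$ minor of $t+S$ is computed as a rank-one perturbation of a signed identity. I expect this final step to be the main obstacle: choosing, in the cases with one or two zero coordinates, a column set whose determinant provably exceeds $1$ for every admissible sign pattern. I would handle it by a small exhaustive case analysis over the signs allowed in the zero rows, using the matrix determinant lemma to evaluate each candidate minor.
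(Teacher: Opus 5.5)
\textbf{There is a genuine gap: only the case without $+1$ entries is proved.}

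Your reduction is sound. Restricting to the $16$ columns supported in a $4$-set gives a size-$4$ Sauer matrix with feasible vector $t_{\IC}$. Hence $t\in\{0,\frac12\}^5$, every row with $t_i=\frac12$ has only entries $0,-1$, and at most two coordinates of $t$ vanish. For $t=\frac12\mathbf 1$ the singleton columns give $\det(\frac12 J_5-I_5)=\frac32$.

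But that is the only case you prove. It is exactly the case in which $S$ has no $+1$ entries, so $S$ has no sign freedom. For one or two zero coordinates you exhibit no minor, only the expectation that a case analysis will produce one. The signs are not yours to ``adjust''; you must defeat every admissible pattern. A priori that is $2^{16}$ sign choices in row $1$ in the one-zero case, and $2^{32}$ in rows $1,2$ in the two-zero case. The intended ``normalisation'' of the zero rows is also unavailable. \Cref{lm:4Sauer_classification}, as stated, determines only $t$; it says nothing about the signs in a row with $t_i=0$ beyond the presence of a $+1$.

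\textbf{What closes the gap.} Combine a linear relation among the $\frac12$-rows with the fact that each $\{\pm\frac12\}$-pattern there occurs in several columns that differ only in the zero rows. That lets you choose the zero-row entries. Write $v_G=\frac12\mathbf 1-\chi_G$ for the restriction to the $\frac12$-rows of any column whose support meets them in $G$.

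\emph{One zero, $t=(0,\frac12,\frac12,\frac12,\frac12)$.} Here $\sum_{i=2}^5 v_{\{i\}}=2v_\emptyset$ and $\abs{\det[v_{\{2\}},\dots,v_{\{5\}}]}=1$. For each $i$ take the column $\{i\}$ or $\{1,i\}$, and take the column $\emptyset$ or $\{1\}$. This gives a $5\times5$ minor of absolute value $\frac12\abs{2a_0-\sum_i a_i}$. Here $a_i\in\{0,\sigma_i\}$ and $a_0\in\{0,\sigma_0\}$, where $\sigma_i,\sigma_0$ are the row-$1$ entries of $S$ in columns $\{1,i\}$ and $\{1\}$. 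Either three $\sigma_i$ agree, giving value $\frac32$. Or they split $2{:}2$, and then $a_0=\sigma_0$ together with the two $a_i=-\sigma_0$ gives value $2$.

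\emph{Two zeros, $t=(0,0,\frac12,\frac12,\frac12)$.} Use $v_{\{3\}}+v_{\{4\}}+v_{\{5\}}=v_\emptyset$. The $4\times4$ minors on rows $\{1,3,4,5\}$ built this way have absolute value $\frac12\abs{a_3+a_4+a_5-a_0}$. Avoiding the value $\frac32$ forces two things:
\begin{itemize}
\item columns $\{1,i\}$ and $\{1,2,i\}$ have the same row-$1$ entry;
\item these entries take both signs as $i$ ranges over $\{3,4,5\}$.
\end{itemize}
The same holds for row $2$. Now use also $v_{\{3,4,5\}}=-v_\emptyset$. Take the $5\times5$ minor on columns $\{1\}$, $\{2,3,4,5\}$, and three columns $P_i\cup\{i\}$ with $P_i\subseteq\{1,2\}$. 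Choose the $P_i$ so that the row-$1$ entries of these three columns sum to minus the row-$1$ entry of column $\{1\}$. Also make their row-$2$ entries sum to the row-$2$ entry of column $\{2,3,4,5\}$. This minor has absolute value $\frac32$.

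An argument of this kind is what your final step needs.
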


Finally, we recall a lemma from \cite{DiffColumnsOther} that allows us to estimate the number of columns of a matrix $A$ from the number of distinct supports of its columns.
\begin{lemma}[\cite{DiffColumnsOther}]\label{lm:columns_to_supps}
    Let $A \in \{-1,0,1\}^{m \times n}$ and let $t \in [0,1)^m$ be a feasible vector for $A$. Let $\FS \subseteq 2^{[m]}$ be the class of supports of the columns of $A$. Then each support in $\FS$ accounts for at most two columns of $A$. Consequently,
    \[
        n \leq 2\abs{\FS}.
    \]
\end{lemma}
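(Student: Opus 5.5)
The plan is to argue by contradiction. Suppose some support $\SC\in\FS$ is the support of three pairwise distinct columns $a,b,c$ of $A$. We then exhibit a $2\times 2$ submatrix of $t+A$ whose determinant has absolute value greater than $1$, contradicting total $1$-submodularity. The bound $n\le 2\abs{\FS}$ follows immediately by summing over supports.

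\textbf{Step 1: reduce to two rows.} On $\SC$ the three columns are distinct $\pm1$ vectors. For each $i\in\SC$ consider the row pattern $(a_i,b_i,c_i)$. If every non-constant pattern were equal to a single pattern $p$ or to $-p$, then each column would be determined by one bit, namely which of the two values it takes on a fixed non-constant row. Then at most two of $a,b,c$ could be distinct, which is a contradiction. So there are rows $i,j\in\SC$ whose patterns are non-constant and not $\pm$ each other. A short case check shows that the $2\times 3$ submatrix of $A$ on rows $i,j$ then has three distinct columns in $\{\pm1\}^2$. Equivalently, writing $x=t_i$ and $y=t_j$, the submatrix of $t+A$ on these two rows contains three of the four points
\[
(\pm1+x,\ \pm1+y)^\top, \qquad x,y\in[0,1).
\]

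\textbf{Step 2: a determinant computation on three corners of a shifted square.} Two corners differing only in the second coordinate, with common first coordinate $s+x$ for $s\in\{\pm1\}$, span a determinant of absolute value $2\abs{s+x}$. Symmetrically, two corners differing only in the first coordinate, with common second coordinate $s'+y$, give $2\abs{s'+y}$.

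Any three corners of the square include one such pair of each kind. If either of these pairs uses the sign $+1$, its determinant is at least $2>1$. Otherwise the available pairs force $2(1-x)\le 1$ and $2(1-y)\le1$. In that case the three corners are exactly $(-1+x,-1+y)$, $(-1+x,1+y)$ and $(1+x,-1+y)$. The last two form a diagonal pair, with determinant
\[
(-1+x)(-1+y)-(1+x)(1+y)=-2(x+y),
\]
whose absolute value is $2(x+y)\ge 2>1$. Either way we get a contradiction.

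\textbf{Main obstacle.} The only delicate point is the combinatorial reduction in Step 1: finding two rows on which the three columns restrict to three distinct sign pairs. Once that is in place, Step 2 is a routine finite computation, and the argument uses only $t\in[0,1)^m$ together with $2\times2$ minors.
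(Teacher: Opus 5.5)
Your proof is correct. The paper does not prove this lemma itself; it quotes it from \cite{DiffColumnsOther} and uses it as a black box, so there is no in-paper argument to match it against.

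Both of your steps check out. In Step 1, two non-constant row patterns that are not $\pm$ each other single out different columns, and this gives three distinct sign pairs on rows $i,j$. A one-line alternative: take $i$ with $a_i\neq b_i$, say $c_i=a_i$, and then any $j$ with $c_j\neq a_j$. In Step 2, every three-corner configuration except the one omitting $(1+x,1+y)$ is excluded at once by a minor $2(1+x)$ or $2(1+y)$. In that remaining case, the two axis-parallel minors force $x,y\ge 1/2$, and the diagonal minor is $2(x+y)\ge 2$.

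For comparison, there is a shorter route that also uses the $1\times 1$ minors. Since $1+t_i>1$ when $t_i>0$, every column with support $\mathcal{S}$ equals $-1$ on the rows $i\in\mathcal{S}$ with $t_i>0$. On the rows with $t_i=0$, the matrix $t+A$ coincides with $A$. There, two $\pm1$ restrictions that are neither equal nor opposite produce a $2\times 2$ minor equal to $\pm 2$. Hence any two columns with support $\mathcal{S}$ are equal or opposite on the $t_i=0$ rows, so each such column is determined by a single sign.

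That argument is shorter, and it exposes the structure behind the type classification in \Cref{lm:4Sauer_classification}: a row containing a $+1$ forces $t_i=0$. Your argument avoids splitting rows according to the value of $t_i$ and never uses the entry bound. It therefore proves the lemma under the weaker hypothesis that only the $2\times 2$ minors of $t+A$ are bounded by $1$ in absolute value.
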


\section{Proof of Main Result (\Cref{th:main_shifted_Heller})}

Let $A \in \{-1,0,1\}^{m \times n}$ be a matrix with pairwise distinct columns and let $t \in [0,1)^m$. To prove the theorem, we need to show that $n = O(m^3)$.

Assume that $t$ is feasible for $A$. Let $\FS \subseteq 2^{[m]}$ be the family of supports of the columns of $A$. Define
\[
    \QC = \left\{ i \in [m] : t_i \in \left\{ 0, 1/2 \right\} \right\},
\]
the set of row indices corresponding to the coordinates where $t$ takes values in $\{0,1/2\}$. Set
\[
    q = \abs{\QC}, \qquad \NotQC = [m] \setminus \QC.
\]

Denote
\[
    \PS = \FS \cap \QC.
\]
\begin{lemma}\label{lm:PS_size}
    \[
        \abs{\PS} \leq q^2 + 2q + 1.
    \]
\end{lemma}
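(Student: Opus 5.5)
The plan is to reduce the bound on $\abs{\PS}$ to a Heller-type count for an integer matrix built from the rows $\QC$ of $t+A$.

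\emph{Step 1: restrict to the rows in $\QC$.} For every support $\PC \in \PS$ I pick one column of $A$ whose support meets $\QC$ in exactly $\PC$. I then form the $q \times \abs{\PS}$ matrix $B$ consisting of these columns of $t+A$, restricted to the rows $\QC$. As a submatrix of the totally $1$-submodular matrix $t+A$, the matrix $B$ is totally $1$-submodular. In particular every entry of $B$ has absolute value at most $1$, since entries are $1\times 1$ minors.

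This pins down the entries row by row. In a row $i$ with $t_i=0$ the entries lie in $\{-1,0,1\}$, and the support of the chosen column is read off from where the entry is nonzero. In a row with $t_i=1/2$ the value $3/2$ is excluded, so $A_{ij}\in\{-1,0\}$ and the entries lie in $\{-1/2,1/2\}$. Here the support is read off from the sign: the entry is $-1/2$ exactly when $i$ is in the support.

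Consequently distinct elements of $\PS$ give distinct columns of $B$. It therefore suffices to show that a matrix $B$ with pairwise distinct columns of this kind has at most $q^2+2q+1=(q+1)^2$ columns.

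\emph{Step 2: make the matrix integral.} Let $\mathcal H\subseteq\QC$ denote the rows with $t_i=1/2$. I append a row of all ones to $B$ and subtract $\tfrac12$ times this row from each row in $\mathcal H$. This is a unimodular row operation, and it produces an integer matrix $C$ of size $(q+1)\times\abs{\PS}$. The columns of $C$ are pairwise distinct and all lie in the affine hyperplane $x_{q+1}=1$.

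The $(q+1)\times(q+1)$ minors of $C$ equal those of $\left(\begin{smallmatrix}B\\ \mathbf 1^\top\end{smallmatrix}\right)$. Subtracting one column from the others, each such minor becomes a $q\times q$ determinant of differences of columns of $B$. These differences are integral with entries in $\{-1,0,1\}$, because differences of the half-rows are integral.

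\emph{Step 3: count with Heller's theorem.} The target is to show that $C$ is $\Delta$-submodular with $\Delta=1$, or at worst $\Delta=2$. Then \eqref{eq:Heller} or \eqref{eq:bimod_diff_colums} applies in dimension $q+1$. Together with the restriction that all columns lie on the hyperplane $x_{q+1}=1$, which removes at least the zero column and the columns with last coordinate $0$ or $-1$, this should give $(q+1)^2$.

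For the extremal configuration in the unimodular case, namely the differences of $\{\BZero,e_1,\dots,e_{q+1}\}$, the columns with last coordinate exactly $1$ number only $q+1$. So the hyperplane restriction must be combined with a genuine structural bound rather than merely subtracting columns from $h(1,q+1)$.

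\emph{Main obstacle.} The hardest step is bounding the $q\times q$ determinants of column differences of $B$. Expanding such a determinant linearly gives a signed sum of $q+1$ minors of $B$, each of absolute value at most $1$, so the naive bound is $q+1$ rather than $1$ or $2$.

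My first attempt would be to exploit total $1$-submodularity of $B$ on all minor sizes. In particular, for a submatrix whose rows lie in $\mathcal H$, all entries are $\pm\tfrac12$, and its minors are bounded; this should force strong cancellation in the expansion.

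If that fails, the fallback is to argue directly on the family $\PS$ through shattering. Any $4$ rows of $\QC$ strongly shattered by the family would yield a Sauer submatrix, which is constrained by \Cref{lm:4Sauer_classification}, and no $5$ rows can be shattered in this way by \Cref{lm:5Sauer_classification}. By \Cref{lm:sandwich} this would give a polynomial bound, though probably not the sharp quadratic one.
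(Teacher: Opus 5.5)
Your Step 1 is correct, but there is a genuine gap at Step 3. The step that carries all the weight, bounding the $(q+1)\times(q+1)$ minors of $C$, is left open, and the target you set for it is false. For $t_{\QC}=\BZero$ and $A_{\QC *}=(I_q\mid -\mathbf 1)$, which is totally unimodular, your matrix is $C=\begin{pmatrix} I_q & -\mathbf 1\\ \mathbf 1^\top & 1\end{pmatrix}$ with $\det C=q+1$. So the naive bound is attained, and no cancellation argument can exist.

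The half-integral case does not rescue it either. Take $q=4$, $t_{\QC}=\frac12\mathbf 1$, and let $A_{\QC*}$ have the columns $\BZero, e_1-\mathbf 1,\dots,e_4-\mathbf 1$. Then $B$ has entries $\pm\frac12$ and is totally $1$-submodular, since Hadamard's inequality bounds every $k\times k$ minor by $k^{k/2}/2^k\le 1$ for $k\le 4$. Yet expanding along the first column gives $\det C=\det\bigl(-(J_4-I_4)\bigr)=-3$, where $J_4$ is the all-ones matrix. So $C$ is not $2$-submodular, and for $\Delta\ge 3$ no exact formula for $h(\Delta,q+1)$ is available to finish.

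Even granting $\Delta(C)\le 2$, the passage from $h(2,q+1)=q^2+5q+5$ down to $(q+1)^2$ via the hyperplane $x_{q+1}=1$ is only asserted, not carried out. Also, differences of columns of $B$ can have entries $\pm 2$ in rows with $t_i=0$.

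The fallback cannot prove the statement either. \Cref{lm:4Sauer_classification} permits $\PS$ to shatter $4$-subsets of $\QC$, so \Cref{lm:shatt} yields only $\abs{\PS}\le s_{(q,4)}=O(q^4)$. That is not the claimed bound, and it would not even suffice for the later $O(m^3)$ count.

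The missing idea is to stay in dimension $q$ and clear denominators with a determinant-$2$ integral transformation instead of a homogenizing row. This is the paper's route.
\begin{itemize}
\item Because $B$ is totally $1$-submodular, $(I_q\mid B)$ is $1$-modular: each of its $q\times q$ minors is, up to sign, a minor of $B$ of some order.
\item If $t_{\QC}\neq\BZero$, the columns of $B$ are non-integral, so $(I_q\mid B)$ has $q+\abs{\PS}$ pairwise distinct columns.
\item The rows of $(I_q\mid B)$ indexed by $\mathcal H$ are congruent to one another modulo integer vectors, and each becomes integral when doubled. Hence the integer matrix $U$ that doubles one half-row and replaces every other half-row by its difference with that row has $\det U=2$. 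It makes $U(I_q\mid B)$ integral and $2$-modular.
\item By \eqref{eq:bimod_diff_colums}, $q+\abs{\PS}\le h(2,q)=q^2+3q+1$. The case $t_{\QC}=\BZero$ follows directly from $h(1,q)$.
\end{itemize}
Your appended row of ones is exactly what destroys determinant control: it turns each full minor into an alternating sum of $q+1$ minors of $B$. The transformation $U$, by contrast, merely multiplies every minor by $2$.
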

\begin{proof}

We assume $t_{\QC} \not= \BZero$, since in the opposite case $A_{\QC *}$ is integral and $\abs{\PS} \leq h(1,q) = q^2 + q + 1$. Additionally, we assume that $q \geq 1$, since otherwise $\PC = \emptyset$ or $\PC = \{\emptyset\}$, and the inequality holds trivially.  

For each \(\PC \in \PS\), choose an index $j$ such that
\[
    \supp(A_{* j}) \cap \QC = \PC,
\]
and define
\[
    x(\PC) = t_{\QC} + A_{\QC\, \{j\}}.
\]
In other words, \( x(\PC) \) is simply a column of \( (t + A)_{\QC *} \) with support \( \PC \).

Note that the vectors \( x(\PC) \) are pairwise distinct\footnote{Indeed, if \( x(\PC_1) = x(\PC_2) \), then the supports of corresponding columns $A_{\QC\, \{j\}}$ are coincide, i.e., \( \PC_1 = \PC_2 \).}. Construct the matrix
\[
    C = \bigl(I_q \mid x(\PC), \; \PC \in \PS\bigr).
\]
Observe that \( C \) is \( 1 \)-modular, has exactly \( q + \abs{\PS} \) columns, and all its columns are pairwise distinct.






Assume that 
\begin{equation*}
    t_{\QC} = (\overbrace{1/2, \dots, 1/2}^{s}, \overbrace{0, \dots, 0}^{q-s}),
\end{equation*}
and define an integral matrix $U \in \ZZ^{q \times q}$ composed of two row-blocks:
\begin{gather*}
    \text{the first $s \times q$ block = }\begin{pmatrix}
        2 & 0 & 0 & \dots  & 0 & 0 \\
        -1 & 1 & 0 & \dots & 0 & 0 \\
        -1 & 0 & 1 & \dots & 0 & 0 \\
        \hdotsfor{6} \\
        -1 & 0 & 0 & \dots & 1 & 0 \\
        -1 & 0 & 0 & \dots & 0 & 1 \\
    \end{pmatrix},\\
    \text{the second $(q-s) \times q$ block = } \left( \BZero_{(q-s) \times s} \; I_{(q-s)} \right).
\end{gather*}

Note that the matrix $U C$ is integer and $2$-modular. Therefore, by \Cref{eq:bimod_diff_colums},
\begin{equation*}
    q + \abs{\PS} \leq h(2,q) = q^2 + 3q +1,
\end{equation*}
which finishes the proof.
\end{proof}

For each \(\PC \in \PS\), define the layer \(\FS_{\PC}\) of \(\FS\) by
\[
    \FS_{\PC} = \left\{ \FC - \PC : \FC \in \FS,\; \FC \cap \QC = \PC \right\} \subseteq 2^{\NotQC}.
\]
The following straightforward equality gives the layer decomposition of \(\FS\):
\begin{equation}\label{eq:FS_layers}
    \FS = \bigsqcup_{\PC \in \PS} \bigl(\FS_{\PC} \cup \PC\bigr).
\end{equation}
For \(\TC \subseteq \NotQC\), define
\[
    \PS_{\TC} = \left\{ \PC \in \PS \colon \TC \text{ is shattered by } \FS_{\PC} \right\}.
\]

\begin{lemma}\label{lm:VCdim_of_PSTC}
    If \(\TC \subseteq \NotQC\) and \(k := \abs{\TC} \in \{1,2,3\}\), then
    \begin{equation*}
        \VCdim(\PS_{\TC}) \leq 3 - k.
    \end{equation*}
    Consequently, by \Cref{lm:shatt},
    \[
        \abs{\PS_{\TC}} \leq s_{(q,3-k)}.
    \]
\end{lemma}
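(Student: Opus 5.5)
We argue by contradiction. Suppose $\PS_{\TC}$ shatters a set $\RC \subseteq \QC$ with $\abs{\RC} = 4-k$. We will then build a Sauer matrix of size $4$ inside $A$ whose row set $\RC \cup \TC$ meets $\NotQC$, and derive a contradiction with \Cref{cor:4Sauer_classification}.

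The first step turns shattering into strong shattering on the $\QC$-side. Since $\PS_{\TC}$ shatters $\RC$, for every $\BC \subseteq \RC$ there is some $\PC_{\BC} \in \PS_{\TC}$ with $\PC_{\BC} \cap \RC = \BC$. By the definition of $\PS_{\TC}$, the layer $\FS_{\PC_{\BC}}$ shatters $\TC$. So for every $\DC \subseteq \TC$ there is a set $\GC \in \FS_{\PC_{\BC}}$ with $\GC \cap \TC = \DC$, and the set $\FC = \GC \cup \PC_{\BC}$ lies in $\FS$. Its trace on $\RC \cup \TC$ is exactly $\BC \cup \DC$, because $\PC_{\BC} \subseteq \QC$ and $\GC \subseteq \NotQC$. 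Hence every subset of $\RC \cup \TC$ is realized as the trace on $\RC \cup \TC$ of the support of some column of $A$. In other words, $\FS$ shatters $\RC \cup \TC$, a set of size $4$.

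Next we select, for each $\BC \cup \DC$, one column of $A$ whose support has that trace, and restrict these columns to the rows $\RC \cup \TC$. The result is a $4 \times 16$ matrix $S$ with entries in $\{-1,0,1\}$ and exactly one column for each support, so $S$ is a Sauer matrix of size $4$.

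Feasibility passes to submatrices. The matrix $t_{\RC\cup\TC} + S$ is a submatrix of $t + A$ up to column selection, so it is totally $1$-submodular. Hence $t_{\RC \cup \TC}$ is feasible for $S$. By \Cref{cor:4Sauer_classification}, every coordinate of $t_{\RC \cup \TC}$ then lies in $\{0, 1/2\}$. But $\TC \subseteq \NotQC$ and $k \geq 1$, so some $i \in \TC$ has $t_i \notin \{0,1/2\}$. This is a contradiction, and therefore $\VCdim(\PS_{\TC}) \leq 3-k$.

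The bound $\abs{\PS_{\TC}} \le s_{(q,3-k)}$ then follows from \Cref{lm:shatt}, since $\PS_{\TC} \subseteq 2^{\QC}$ and $\abs{\QC} = q$.

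The main point needing care is the first step, combining the two layers of shattering into shattering of $\RC \cup \TC$ by $\FS$. This works because $\QC$ and $\NotQC$ are disjoint and each layer is indexed by its full trace on $\QC$. For the boundary case $k=3$, $\RC=\emptyset$ means we are showing $\PS_{\TC}$ does not shatter $\emptyset$, i.e.\ $\PS_{\TC}=\emptyset$. The same argument covers it: any single $\PC \in \PS_{\TC}$ already gives a Sauer matrix of size $3$, and we use $\RC=\emptyset$ together with one additional row to reach size $4$. If instead we simply aim for $\VCdim \le 3-k$ in the sense of "no shattered set of size $4-k$", the case $k=3$ only requires ruling out a shattered singleton $\RC$, which is again size $4$, so the argument applies uniformly.
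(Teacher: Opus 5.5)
Your proof is correct and follows the paper's argument: the shattering of $\mathcal{R}$ by $\PS_{\TC}$ and of $\TC$ by each layer combine into shattering of the four-element set $\mathcal{R}\cup\TC\not\subseteq\QC$ by $\FS$. You also make explicit a step the paper leaves implicit, namely extracting a size-$4$ Sauer submatrix for which $t_{\mathcal{R}\cup\TC}$ is feasible, so that \Cref{cor:4Sauer_classification} applies.

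Your closing paragraph is muddled and should simply be dropped. For $k=3$ the assumed shattered set has $\abs{\mathcal{R}}=1$, not $\mathcal{R}=\emptyset$. You neither need nor prove $\PS_{\TC}=\emptyset$. Padding a size-$3$ Sauer matrix with an extra row does not yield a size-$4$ one. As your last sentence notes, the main argument already covers all $k\in\{1,2,3\}$ uniformly.
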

\begin{proof}
    Assume for contradiction that \(\PS_{\TC}\) shatters some set \(\SC \subseteq \QC\) with \(|S| = 4-k\).  
    For each \(\UC \subseteq \SC\), choose \(\PC_{\UC} \in \PS_{\TC}\) such that \(\PC_{\UC} \cap \SC = \UC\).  
    Since \(\TC\) is shattered by \(\FS_{\PC_{\UC}}\), and by the definition of $\FS_{\PC_{\UC}}$, for each \(\VC \subseteq \TC\) there exists \(\FC \in \FS\) such that
    \[
        (\FC - \PC_{\UC}) \cap \TC = \VC, \qquad \FC \cap \QC = \PC_{\UC}.
    \]

    Hence, we get
    \[
    \FC \cap \SC = \UC, \qquad \FC \cap \TC = \VC.
    \]
    Consequently, \(\FS\) shatters \(\SC \cup \TC\).  
    However, since \(\SC \subseteq \QC\) and \(\TC \subseteq \NotQC\), we have
    \begin{equation}\label{eq:TC_cup_SC}
    	\abs{\SC \cup \TC} = \abs{\SC} + \abs{\TC} = (4-k) + k = 4.
    \end{equation}
    
    \noindent Additionally, since \(k \ge 1\), we have \(\SC \cup \TC \not\subseteq \QC\). Together with \eqref{eq:TC_cup_SC}, this contradicts \Cref{cor:4Sauer_classification}. Indeed, \Cref{cor:4Sauer_classification} guaranty that every four-element set shattered by \(\FS\) must lie entirely in \(\QC\). This contradiction implies that \(\VCdim(\PS_{\TC}) \le 3-k\) and completes the proof.
\end{proof}

By \Cref{cor:4Sauer_classification} and the definition of $\QC$, no layer $\FS_{\PC}$ of $\FS$ shatters a four-element subset of $\NotQC$. Hence, by \Cref{lm:sandwich},
\[
    \abs{\FS_{\PC}} \leq \abs{\shatt(\FS_{\PC})} = \sum_{\substack{\TC \subseteq \NotQC \\ \abs{\TC} \leq 3}} [\TC \text{ is shattered by } \FS_{\PC}].
\]

\noindent Summing over $\PC \in \PS$ (using the layer decomposition \eqref{eq:FS_layers}), we obtain
\begin{equation*}
    \abs{\FS} \leq \sum_{\PC \in \PS} \sum_{\substack{\TC \subseteq \NotQC \\ \abs{\TC} \leq 3}} [\TC \text{ is shattered by } \FS_{\PC}] = \sum_{\substack{\TC \subseteq \NotQC \\ \abs{\TC} \leq 3}} \abs{\PS_{\TC}}.
\end{equation*}

\noindent Now we apply the trivial bound $\abs{\PS_{\emptyset}} \leq \abs{\PS}$ together with \Cref{lm:VCdim_of_PSTC}:
\[
\abs{\FS} \leq \abs{\PS_{\emptyset}} + \sum_{k=1}^3 \sum_{\substack{\TC \subseteq \NotQC \\ \abs{\TC}=k}} \abs{\PS_{\TC}} \leq \abs{\PS} + \sum_{k=1}^3 \binom{m-q}{k} \cdot s_{(q,3-k)}.
\]

Finally, by \Cref{lm:PS_size}, we get
\[
    \abs{\FS} = O(q^2) + \sum_{k=1}^3 O(m-q)^k \cdot O(q)^{3-k} = O(m^3).
\]

\noindent Using \Cref{lm:columns_to_supps}, it provides $n \leq 2 \abs{\FS} = O(m^3)$, which finishes the proof.

























\section*{Acknowledgements}

The authors would like to express their sincere gratitude to the organizers of the Research Experience Program for Undergraduates 2026 (LIPS) for providing an excellent research platform and continuous support. The program was hosted by the Laboratory of Combinatorial and Geometric Structures at the Phystech School of Applied Mathematics and Informatics, MIPT. We also thank our fellow participants for many insightful conversations that contributed to this work.
    
\bibliographystyle{plainnat}
\bibliography{biblio}

\end{document}